\def\@abssec#1{\vspace{.05in}\footnotesize \parindent .2in
{\bf #1. }\ignorespaces} 
\newtheorem{theorem}{Theorem}[section]
\newtheorem{proposition}[theorem]{Proposition}
\newtheorem{corollary}[theorem]{Corollary}
\def \Rm {\mathbb R}
\def \reals {\mathbb R}
\def\cS{\mathcal S}
\allowdisplaybreaks \numberwithin{equation}{section}
\title[Biomixing by chemotaxis]{Biomixing by chemotaxis and efficiency of biological reactions: the critical reaction case}
\author{Alexander Kiselev}
\thanks{Department of
Mathematics, University of Wisconsin, Madison, WI 53706, USA; email: kiselev@math.wisc.edu}
\author{Lenya Ryzhik}
\thanks{Department of
Mathematics, Stanford University, Stanford, CA 94305; email: ryzhik@math.stanford.edu}
\dedicatory{To Professor Peter Constantin}
\begin{document}


\begin{abstract}
Many phenomena in biology involve both reactions and chemotaxis.
 These processes can clearly influence each other, and chemotaxis can play an important role in sustaining and speeding up the reaction. In continuation of our work \cite{KR10}, we consider a
model with a single density function involving diffusion, advection, chemotaxis, and absorbing reaction. The model is motivated, in particular,
by the studies of coral broadcast spawning, where experimental observations of the
efficiency of fertilization rates significantly exceed the data obtained from numerical models that do not take chemotaxis (attraction of sperm gametes by a chemical secreted by egg gametes) into account. We consider the
case of the weakly coupled quadratic reaction term, which is the most natural from the
biological point of view and was left open in \cite{KR10}. The result is that similarly to \cite{KR10}, the chemotaxis plays a crucial role
in ensuring efficiency of reaction. However, mathematically, the picture is quite different in the quadratic reaction case and is more subtle. The reaction is now complete even in the absence of chemotaxis, but
the timescales are very different. Without chemotaxis, the reaction is very slow, especially for the weak reaction coupling. With chemotaxis, the timescale and efficiency of reaction are independent of the coupling
parameter.
\end{abstract}

\maketitle

\section{Introduction}\label{intro}

Our goal in this paper is to study the effect chemotactic attraction may have on reproduction processes in biology. A particular motivation for this study comes from the phenomenon of broadcast spawning. Broadcast spawning
is a fertilization strategy used by various benthic invertebrates (sea urchins, anemones, corals) whereby males and females release sperm and egg gametes into the surrounding flow. The gametes are positively buoyant, and
rise to the surface of the ocean. The sperm and egg are initially separated by the ambient water, and effective mixing is necessary for successful fertilization. The fertilized gametes form larva, which is negatively buoyant
and tries to attach to the bottom of the ocean floor to start a new colony. For the coral spawning problem, field measurements of the fertilization rates are rarely below $5$\%, and are often as high as $90$\%
\cite{Eck,Lasker,Penn,Yund}. On the other hand, numerical simulations based on the turbulent eddy diffusivity \cite{DShi} predict fertilization rates of less than $1$\% due to the strong dilution of gametes. The turbulent
eddy diffusivity approach involves two scalars that react and diffuse with the effective diffusivity taking the presence of the flow into account. 
instantaneous details of the advective transport not captured by eddy diffusivity approach. It is well known, however, that the geometric structure of the fluid flow lost in the turbulent diffusivity approach can be
important for improving the reaction rate (in the physical and engineering literature see  \cite{Ottino,Ro,Y}; in the mathematical literature see  \cite{MS,CKOR,KR,FKR,KZ} for further references). Recent work of Crimaldi,
Hartford, Cadwell and Weiss \cite{Weiss1,Weiss2} employed a more sophisticated model, taking into account the instantaneous details of the advective transport not captured by the eddy diffusivity approach. These papers
showed that vortex stirring can generally enhance the reaction rate, perhaps accounting for some of the discrepancy between the numerical simulations and experiment.

However, there is also experimental evidence that chemotaxis plays a role in coral fertilization: eggs release a chemical that attracts sperm
\cite{Colletal1,Colletal2,Miller1,Miller2}. Mathematically, chemotaxis has been extensively studied in the context of modeling mold and bacterial colonies. Since the original work
of Patlak \cite{Patlak} and Keller-Segel \cite{KS1,KS2} where the first PDE model of chemotaxis was introduced, there has been an enormous amount of effort devoted to the possible blow up and regularity of solutions, as well
as the asymptotic behavior and other properties (see \cite{Pert} for further references). However, we are not aware of any rigorous or even computational work on the effects of chemotaxis for improved efficiency of
biological reactions.

In this paper, we continue the study of a simple single partial differential equation modeling the fertilization process that we initiated in \cite{KR10}. The equation is given by
\begin{equation}\label{chemo}
\partial_t \rho+u\cdot\nabla\rho =
\Delta \rho+ \chi \nabla\cdot (\rho \nabla[(\Delta)^{-1}\rho])  -  \epsilon \rho^q, \,\,\,\rho(x,0)=\rho_0(x),~~x\in\Rm^d.
\end{equation}
Here, in the simplest approximation, we consider just one density, $\rho(x,t) \geq 0,$ corresponding to the assumption that  the densities of sperm and egg gametes are identical. The vector field $u$ in \eqref{chemo} models
the ambient ocean flow, is divergence free, regular and prescribed, independent of $\rho.$ The second term on the right is the standard chemotactic term, in the same form as it appears in the (simplified) Keller-Segel
equation (see \cite{Pert}). This term describes the tendency of $\rho(x,t)$ to move along the gradient of the chemical whose distribution is equal to $-\Delta^{-1}\rho.$ This is an approximation to the full Keller-Segel
system based on the assumption of chemical diffusion being much faster than diffusion of gamete densities. The term $(-\epsilon\rho^q)$ models the reaction (fertilization). The parameter $\epsilon$ regulates the strength of
the fertilization process. The value of $\epsilon$ is small due to the fact that an egg gets fertilized only if a sperm attaches to a certain limited area on its surface (estimated to be about 1\% of the total egg surface
in, for example, sea urchins eggs \cite{Vogel}). We do not account for the product of the reaction -- fertilized eggs -- which drop out of the process. We are interested in  the behavior of
\[
m_0(t)=\int_{\Rm^d}\rho(x,t)dx,
\]
which is the total fraction of the unfertilized eggs by time $t$. It is easy to see that $m_0(t)$ is monotone decreasing. High efficiency fertilization corresponds to $m_0(t)$ becoming small with time, as almost all egg
gametes are fertilized.

The case $q>2$ in \eqref{chemo} has been considered in \cite{KR10}. In this case, we proved that if chemotaxis is not present, there exists a constant $\mu_0(\epsilon,q,d,\rho_0)$ such that $m_0(t) \geq \mu_0$ for all $t.$
Moreover, if we fix everything except let $\epsilon \rightarrow 0,$ $\mu_0$ converges to $m_0(0).$ Hence, in the absence of chemotaxis weak reaction coupling leads to a
very weak reaction completion rate. On the other hand, we
showed that if $d=2$ and chemotaxis is present, then $m_0(t) \rightarrow \nu(\chi,q,\rho_0,u)$ as $t \rightarrow \infty.$ Here, $\nu$ tends to zero if we increase $\chi$ keeping everything else fixed. Moreover, $\nu$
and the time scale of the convergence do not depend on $\epsilon,$ showing drastic difference in behavior compared to the pure reaction-diffusion-advection case.

The case $d=q=2$ in \eqref{chemo} is the most natural one - it corresponds to the reaction proportional to the product of egg and sperm densities. It turns out that the situation in this case is more subtle. Without chemotaxis, it is not necessarily true that $m_0(t)$ is bounded below from zero for all times by a fixed positive constant. Instead, $m_0(t) \rightarrow 0$ as $t \rightarrow 0,$ but very slowly.
To formulate our results, let us define $F(z)= z/\log((ez)^{-1}).$
\begin{theorem}\label{freeub}[Upper bound in the absence of chemotaxis]
Suppose that $\rho(x,t)$ satisfies
\begin{equation}\label{fcrit}
\partial_t \rho +u \cdot \nabla \rho = \Delta \rho - \epsilon \rho^2, \,\,\,\rho(x,0)=\rho_0(x)>0,
\end{equation}
where $u(x,t) \in C^\infty(\Rm^2 \times [0,\infty))$ is divergence free, and its stream function $H$ is globally bounded.
Let the initial data $\rho_0$ be such that
\[
\int_{\reals^2} \rho_0(x) e^{\alpha |x|}\,dx < \infty,
\]
for
 $0<\alpha \leq \alpha_0.$ Then $\|\rho(\cdot,t)\|_{L^1} \rightarrow 0$ as $t \rightarrow \infty.$
Moreover, there exists $t_0\ge 0$ that depends only on $\alpha_0$ so that for all times $t \geq t_0$ we have
\begin{equation}\label{Fbound}
F(\|\rho(\cdot,t)\|_{L^1}) \leq \frac{F(\|\rho(\cdot,t_0)\|_{L^1})}{1+\epsilon C(\rho_0) F(\|\rho(\cdot,t_0)\|_{L^1})\log (t/t_0)}.
\end{equation}
\end{theorem}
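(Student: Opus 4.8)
The plan is to reduce the theorem to an ordinary differential inequality for $m_0(t):=\|\rho(\cdot,t)\|_{L^1}$ and then to establish the lower bound on $\|\rho\|_{L^2}^2$ that drives it. Since $\rho_0>0$ and the reaction term $-\epsilon\rho^2$ is nonpositive, the maximum principle gives $\rho\ge 0$, so $m_0(t)=\int_{\reals^2}\rho\,dx$. Integrating \eqref{fcrit} over $\reals^2$ and using $\nabla\cdot u=0$ together with the decay of $\rho$ at infinity yields
\[
\frac{d}{dt}m_0(t)=-\epsilon\int_{\reals^2}\rho^2\,dx=-\epsilon\|\rho(\cdot,t)\|_{L^2}^2 .
\]
A direct computation with $F$, using $F'(z)/F(z)^2=\log(1/z)/z^2$, shows that \eqref{Fbound} is exactly the integrated form of the differential inequality $\frac{d}{dt}F(m_0)^{-1}\ge \epsilon C/t$, and that the latter follows once one knows
\[
\|\rho(\cdot,t)\|_{L^2}^2\ \ge\ \frac{c\,m_0(t)^2}{t\,\log(1/m_0(t))},\qquad t\ge t_0 .
\]
Thus the whole theorem rests on proving this pointwise-in-time lower bound and integrating it from $t_0$.

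To bound $\|\rho\|_{L^2}$ from below I control the spatial spread of the mass. By Cauchy--Schwarz, if $\int_{|x|>R}\rho\le \tfrac12 m_0$ then $\tfrac12 m_0\le \|\rho\|_{L^2}(\pi R^2)^{1/2}$, so $\|\rho\|_{L^2}^2\ge m_0^2/(4\pi R^2)$; it remains to show that the bulk of the mass stays within a \emph{diffusive} radius $R(t)\sim\sqrt{t\log(1/m_0)}$ rather than a ballistic one. I obtain this from a weighted energy estimate. Multiplying \eqref{fcrit} by $\rho\,e^{2\psi}$ with a radial weight $\psi=\psi(|x|)$, $\|\psi'\|_\infty\le a$, integrating, and discarding the nonpositive reaction contribution gives
\[
\tfrac12\frac{d}{dt}\int\rho^2 e^{2\psi}\ \le\ -\big\|\nabla(\rho e^{\psi})\big\|_{L^2}^2+\int\rho^2|\nabla\psi|^2 e^{2\psi}+\int\rho^2\,(u\cdot\nabla\psi)\,e^{2\psi}.
\]

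The advection term is the crux, because $u$ need not be bounded; here the hypothesis that the stream function $H$ is globally bounded is essential. Writing $u\cdot\nabla\psi=\nabla^\perp H\cdot\nabla\psi=\{H,\psi\}$ and using that $\psi$ is radial, one integration by parts moves the derivative off $H$, giving $\int\rho^2 e^{2\psi}\{H,\psi\}=-2\int H\,\frac{\psi'}{r}\,\rho\,\partial_\theta\rho\,e^{2\psi}$, which is bounded by $2\|H\|_\infty\int|\psi'|\,\rho\,|\nabla\rho|\,e^{2\psi}$ and then, by Young's inequality, absorbed into $\tfrac12\|\nabla(\rho e^{\psi})\|_{L^2}^2+C\|H\|_\infty^2\int|\psi'|^2\rho^2 e^{2\psi}$. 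Consequently the possible unboundedness of $u$ never enters, only $\|H\|_\infty$ does, and one obtains $\frac{d}{dt}\int\rho^2 e^{2\psi}\le C(1+\|H\|_\infty^2)a^2\int\rho^2 e^{2\psi}$, hence $\int\rho^2 e^{2\psi}(t)\le e^{Ca^2 t}\int\rho_0^2 e^{2\psi}$. I expect verifying that this radial-weight Poisson-bracket structure genuinely closes the estimate, with a constant depending on $\|H\|_\infty$ alone, to be the main technical obstacle.

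Taking $\psi=a|x|$ (smoothed near the origin), the hypothesis $\int\rho_0 e^{\alpha_0|x|}\,dx<\infty$ makes $\int\rho_0^2 e^{2\psi}$ finite once $2a\le\alpha_0$. Cauchy--Schwarz then gives $\int_{|x|>R}\rho\le(\int\rho^2 e^{2\psi})^{1/2}(\int_{|x|>R}e^{-2a|x|})^{1/2}\lesssim e^{Ca^2 t/2}e^{-aR}$ up to polynomial factors, and optimizing in $a\sim R/t$ produces a genuinely diffusive Gaussian tail $\int_{|x|>R}\rho\lesssim e^{-cR^2/t}$. The Nash $L^1$--$L^\infty$ bound $\|\rho(\cdot,t)\|_{L^\infty}\lesssim t^{-1}$ forces $m_0$ to decay at most polynomially, so $\log(1/m_0)\lesssim\log t$ and the choice $a\sim R/t\le\alpha_0/2$ is admissible for all $t\ge t_0(\alpha_0)$. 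Choosing $R^2\sim (t/c)\log(1/m_0(t))$ makes the tail $\le\tfrac12 m_0$, and the concentration bound above yields $\|\rho\|_{L^2}^2\ge c\,m_0^2/(t\log(1/m_0))$. Feeding this into $\frac{d}{dt}F(m_0)^{-1}\ge \epsilon C/t$ and integrating from $t_0$ gives \eqref{Fbound}; since its right-hand side tends to $0$ as $t\to\infty$ and $F$ is an increasing bijection near $0$, we conclude $\|\rho(\cdot,t)\|_{L^1}\to0$.
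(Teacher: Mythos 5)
Your overall architecture matches the paper's: reduce \eqref{Fbound} to the differential inequality $\dot m_0 \le -\epsilon C\, m_0^2/(t\log(1/m_0))$ via the mass identity and the $F$-calculus (both of which you do correctly), and obtain the $L^2$ lower bound from a tail estimate plus Cauchy--Schwarz, exactly as in Proposition~\ref{l2l1}. Where you genuinely diverge is the source of the tail estimate. The paper uses the comparison principle $\rho\le\phi$ and quotes Norris's Gaussian upper bound on the fundamental solution (Theorem~\ref{norristhm}), which gives propagation of exponentially weighted \emph{$L^1$} norms, $\int\rho\, e^{\alpha|x|}dx\le Ce^{C\alpha^2t}\int\rho_0e^{\alpha|x|}dx$ (Corollary~\ref{norrcor}), hence a tail bound $\int_{|x|>R}\rho\le C(\rho_0)e^{C\alpha^2t-\alpha R}$ with no $t$-dependent prefactor. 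You instead prove a weighted \emph{$L^2$} energy estimate; your handling of the advection term via $u\cdot\nabla\psi=-\tfrac{\psi'}{r}\partial_\theta H$ and an angular integration by parts is correct and does close with a constant depending only on $\|H\|_{L^\infty}$ --- a nice self-contained substitute for the cited black box.

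The gap is in converting weighted $L^2$ control back into the needed $L^1$ tail bound. Cauchy--Schwarz gives $\int_{|x|>R}\rho\le\bigl(\int\rho^2e^{2\psi}\bigr)^{1/2}\bigl(\int_{|x|>R}e^{-2a|x|}dx\bigr)^{1/2}$, and the second factor is of size $(R/a)^{1/2}e^{-aR}$; after optimizing $a\sim R/(Ct)$ your tail bound is $C(\rho_0)\,t^{1/2}e^{-R^2/(2Ct)}$, so the ``polynomial factors'' you wave off are a factor $t^{1/2}$. Forcing the tail below $\tfrac12 m_0$ then requires $R^2\gtrsim t\,(\log(1/m_0)+\log t)$, not $R^2\sim t\log(1/m_0)$. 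This is fatal in the regime where the theorem actually lives: by the companion lower bound (Theorem~\ref{lowerbound}), $m_0(t)\gtrsim(\epsilon\log t)^{-1}$, so $\log(1/m_0(t))\sim\log\log t\ll\log t$ for all large $t$, and the $\log t$ term dominates. Your claimed choice ``$R^2\sim(t/c)\log(1/m_0)$ makes the tail $\le\tfrac12 m_0$'' therefore fails for large $t$; with the choice that does work, the differential inequality degrades to $\dot m_0\le-c\epsilon\, m_0^2/(t\log t)$, which integrates only to $m_0(t)\lesssim 1/(\epsilon\log\log t)$ --- exponentially weaker than \eqref{Fbound}, which yields $m_0\lesssim\log\log t/(\epsilon\log t)$. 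The missing ingredient is precisely a weighted $L^1$ bound (no volume factor), which cannot be recovered from your $L^2$ estimate by H\"older, and which does not close by a direct energy computation either: in $L^1$ the advection term leaves $\|H\|_{\infty}\int|\psi'|\,|\nabla\rho|\,e^{\psi}dx$ with no dissipation to absorb it --- this is exactly why the paper invokes Norris. A secondary issue: your starting point needs $\int\rho_0^2e^{2\psi}dx<\infty$, which does not follow from the stated hypothesis $\int\rho_0e^{\alpha_0|x|}dx<\infty$ unless you additionally assume, e.g., $\rho_0\in L^\infty$. Your argument does still prove the qualitative statement $\|\rho(\cdot,t)\|_{L^1}\to0$, but not the quantitative bound \eqref{Fbound}.
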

\noindent\it Remarks. \rm
1. The origin of the condition on stream function will be clear from the proof. An additional constant drift can be accommodated
without any problem. Hence, the theorem holds, for example, for every smooth periodic flow $u.$ \\
2. In particular, the bound \eqref{Fbound} implies that $\|\rho(\cdot,t)\|_{L^1} \lesssim C\log\log t/\log t$ for large $t.$ \\
3. The condition on the decay of $\rho_0$ can be relaxed at the expense of making the bound in \eqref{Fbound} slightly weaker.
For example, it will be clear from the proof that for $\rho_0 \in \cS$ (Schwartz class), we have that for $t \geq 1$ and every $\sigma>0,$
\begin{equation}\label{mombound}
 \|\rho(\cdot,t)\|_{L^1} \leq  \frac{C(\sigma,\rho_0)}{\left(1+\epsilon  \log t \right)^{1-\sigma}}.
\end{equation}
The bound \eqref{Fbound} is pretty close to being sharp, as the next theorem shows.
\begin{theorem}\label{lowerbound} [A lower bound in the absence of chemotaxis]
Suppose that $\rho(x,t)$ satisfies \eqref{fcrit}
where $u(x,t) \in C^\infty(\Rm^2 \times [0,\infty))$ is divergence free. Suppose that the initial data $\rho_0 >0 \in \cS.$
Then there exists $0<t_0<\infty$ depending on $\rho_0$ and $\epsilon$ such that for $t \geq t_0$ we have
\begin{equation}\label{lbkey}
\|\rho(\cdot,t)\|_{L^1} \geq \frac{\|\rho(\cdot,t_0)\|_{L^1}}{1+C\epsilon \|\rho(\cdot, t_0)\|_{L^1} \log (t/t_0) \exp(\epsilon\|\rho(\cdot,t_0)\|_{L^1})}.
\end{equation}
Moreover, there exists $c_0>0$ so that if  $\epsilon \|\rho_0\|_{L^1}\le c_0,$ then
\begin{equation}\label{simplerlb}
\|\rho(\cdot,t)\|_{L^1} \geq \frac{\|\rho_0\|_{L^1}(1-\epsilon C \|\rho_0\|_{L^1})}{1+C\epsilon \|\rho_0\|_{L^1} \log (t/t_0)}
\end{equation}
for $t \geq t_0 \equiv C\|\rho_0\|_{L^1}^2/\|\rho_0\|_{L^2}^2,$ where $C$ is a universal constant.
\end{theorem}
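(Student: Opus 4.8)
The plan is to reduce everything to a pair of coupled differential relations for the mass $m_0(t)=\|\rho(\cdot,t)\|_{L^1}$ and the energy $E(t)=\|\rho(\cdot,t)\|_{L^2}^2$. Integrating \eqref{fcrit} and using $\nabla\cdot u=0$ (which makes both $\int u\cdot\nabla\rho$ and $\int\Delta\rho$ vanish, granted enough spatial decay — this holds since $\rho_0\in\cS$, $u$ is smooth, and the reaction is a lower-order sink, so parabolic regularity keeps $\rho$ smooth and decaying, while the maximum principle gives $\rho>0$) yields the exact dissipation identity $m_0'(t)=-\epsilon E(t)$. Multiplying \eqref{fcrit} by $\rho$ and integrating gives $\tfrac12 E'(t)=-\|\nabla\rho\|_{L^2}^2-\epsilon\int\rho^3\le-\|\nabla\rho\|_{L^2}^2$, again because the advection term integrates to zero. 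The two-dimensional Nash inequality $\|f\|_{L^2}^2\le C_N\|f\|_{L^1}\|\nabla f\|_{L^2}$ then gives $\|\nabla\rho\|_{L^2}^2\ge E^2/(C_N^2 m_0^2)$, so that
\[
E'(t)\le-\frac{2}{C_N^2}\,\frac{E(t)^2}{m_0(t)^2}.
\]
Everything then hinges on upgrading this to the \emph{current-mass} smoothing bound $E(t)\le C\,m_0(t)^2/t$: substituting it into $m_0'=-\epsilon E$ gives $(1/m_0)'\le C\epsilon/t$, and integrating from $t_0$ produces exactly the $1/\log$ decay $m_0(t)\ge m_0(t_0)/(1+C\epsilon m_0(t_0)\log(t/t_0))$ of \eqref{lbkey}–\eqref{simplerlb}.

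The natural object is $G(t)=m_0(t)^2/E(t)$. Differentiating and inserting the two relations above gives the clean Riccati-type bound $G'\ge \tfrac{2}{C_N^2}-2\epsilon m_0(t)$. In the weak-coupling regime $\epsilon\|\rho_0\|_{L^1}\le c_0$, choosing $c_0=\tfrac1{2C_N^2}$ and using monotonicity of $m_0$ forces $\epsilon m_0(t)\le\tfrac1{2C_N^2}$ for all $t$, hence $G'\ge\tfrac1{C_N^2}$ and $G(t)\ge G(0)+t/C_N^2\ge t/C_N^2$. This is precisely $E(t)\le C_N^2 m_0(t)^2/t$ with the current mass, valid for every $t>0$; integrating $m_0'=-\epsilon E$ from $t_0$ then yields \eqref{simplerlb}. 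To replace $m_0(t_0)$ by $m_0(0)(1-\epsilon C\|\rho_0\|_{L^1})$ I would control $\int_0^{t_0}E\,ds$ by splitting at the diffusive time and using $E\le E(0)$ for small $s$ and $E\le C m_0(0)^2/s$ afterwards, which is where the stated scale $t_0\asymp\|\rho_0\|_{L^1}^2/\|\rho_0\|_{L^2}^2=G(0)$ enters.

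The general bound \eqref{lbkey}, and the main obstacle, is the absence of smallness. Then the coefficient $\tfrac{2}{C_N^2}-2\epsilon m_0$ is negative while $m_0$ is large, so $G$ need not grow, and a naive estimate only gives $E(t)\le C m_0(t_0)^2/(t-t_0)$ with the frozen initial mass; integrating this produces the \emph{vacuous} $m_0(t)\ge m_0(t_0)-C\epsilon m_0(t_0)^2\log(t/t_0)$, which goes negative. The remedy, which I expect to be the technical heart of the argument, is a bootstrap recovering the current mass: assume the trial bound $m_0(t)\ge m_0(t_0)/(1+B\log(t/t_0))$ on a maximal interval, use it to show $\int_{t_0}^t m_0\,ds=o(t)$ so that $G(t)\ge \tfrac{2}{C_N^2}(t-t_0)-2\epsilon\int_{t_0}^t m_0\,ds\gtrsim t$, re-insert into $m_0'=-\epsilon E$, and verify the resulting estimate strictly improves the trial one, closing the loop. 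Tracking the constants through this Gronwall step — equivalently, controlling the ratio $m_0(t/2)/m_0(t)$ that relates the restarted smoothing bound $E(t)\le C m_0(t/2)^2/t$ to the current mass — is what forces the factor $\exp(\epsilon\|\rho(\cdot,t_0)\|_{L^1})$ in \eqref{lbkey} and makes $t_0$ depend on both $\rho_0$ and $\epsilon$. I anticipate this self-consistent current-mass $L^2$ bound to be the only genuinely delicate point; the remaining steps are routine ODE integration.
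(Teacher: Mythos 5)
Your treatment of the weak-coupling bound \eqref{simplerlb} is correct, and it is genuinely different from the paper's route: you work directly with the nonlinear solution via the pair $m_0'=-\epsilon E$, $E'\le -\tfrac{2}{C_N^2}E^2/m_0^2$ and the monotone quantity $G=m_0^2/E$, whereas the paper never estimates $\|\rho\|_{L^2}$ of the nonlinear solution directly; it dominates $\rho$ by the solution $\phi$ of the linear advection--diffusion equation (comparison principle), applies Nash to get $L^1\to L^2$ decay of $\phi$, duality to get $L^2\to L^\infty$, combines them into $\|\phi(\cdot,t)\|_{L^\infty}\le C\|\rho_0\|_{L^1}t^{-1}$, and only then interpolates back to $L^2$. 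For \eqref{simplerlb} your argument is arguably cleaner and more self-contained, and it even yields the correct time scale $t_0\asymp\|\rho_0\|_{L^1}^2/\|\rho_0\|_{L^2}^2=G(0)$ for free.

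The gap is in the general case \eqref{lbkey}, exactly where you located the ``technical heart.'' Your bootstrap asks to deduce $\int_{t_0}^t m_0\,ds=o(t)$ from the trial bound, but the trial bound is a \emph{lower} bound on $m_0$ and cannot control $\int m_0$ from above; and no substitute is available, because under the hypotheses of this theorem ($u$ smooth and divergence free, stream function \emph{not} assumed bounded) there is no mechanism forcing the mass to decay at all. For a hyperbolic flow with stream function $H=x_1x_2$ the solution spreads exponentially, $E(t)$ decays exponentially, the reaction shuts off, and $m_0(t)$ stagnates at a positive constant; then $\tfrac{2}{C_N^2}-2\epsilon m_0(t)$ can stay negative for all time, the differential inequality for $G$ yields nothing, and no self-improvement loop can manufacture $G\gtrsim t$ (the conclusion \eqref{lbkey} is still true there, but your method cannot see it). Splitting into cases according to whether $\epsilon m_0$ ever drops below the threshold does not rescue the argument either: in the bad case one still needs, already for $t$ near $t_0$, a current-mass bound $E(t)\lesssim e^{C\epsilon m_0(0)}m_0(t)^2/t$, and your framework cannot produce it because your restarted smoothing bound $E(t)\le Cm_0(t/2)^2/t$ is \emph{quadratic} in the stale mass, so the doubling step $\log\bigl(m_0(t/2)/m_0(t)\bigr)\le \epsilon\int_{t/2}^t E/m_0\,ds$ does not close. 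This is precisely what the paper's extra PDE input fixes: restarting the \emph{linear} comparison at time $t/2$ and interpolating through $L^\infty$ gives $\|\rho(\cdot,t)\|_{L^2}^2\le C\,m_0(t/2)\,m_0(t)\,t^{-1}$, which is \emph{linear} in $m_0(t/2)$; with the crude bound $m_0(t/2)\le m_0(0)$ this yields $\partial_t\log m_0\ge -C\epsilon m_0(0)/t$, hence the unconditional doubling estimate $m_0(t/2)\le 2e^{C\epsilon m_0(0)}m_0(t)$, then $E(t)\le Ce^{C\epsilon m_0(0)}m_0(t)^2 t^{-1}$, and \eqref{lbkey} follows by ODE integration --- no smallness, no bootstrap, and this is exactly where the exponential factor originates.
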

\noindent
Hence the estimate of $\sim (1+\epsilon \log t)^{-1}$ decay rate of $L^1$ norm of $\rho$ for large $t$ is almost sharp.

In the case when chemotaxis is present, we prove essentially the same result as in \cite{KR10}.

\begin{theorem}\label{chemore}[Estimates with chemotaxis]
Let $d=2,$ and suppose that $u \in C^\infty(\Rm^d \times [0,\infty))$ is divergence free. Assume that $q=2$ and $\rho(x,t)$ solves \eqref{chemo} with $\rho_0 \geq 0 \in \cS.$ Then

a. If $u=0,$ 
then for every $\tau>0,$ we have
\begin{equation}\label{dec1a}
\|\rho(\cdot,\tau)\|_{L^1} \leq \frac{2}{\chi}\left(1+\sqrt{1+\frac{\chi m_2}{4\tau}}\right).
\end{equation}

b. If $u \ne 0,$ then $\lim_{t \rightarrow \infty}\|\rho(\cdot,t)\|_{L^1} \leq C(u,m_2)\chi^{-2/3}.$ Moreover, for $0 \leq \tau \leq \chi^{1/3}$ we have
\begin{equation}\label{dec1b}
\|\rho(\cdot,\tau)\|_{L^1} \leq C(u,m_2)(\chi\tau)^{-1/2}.
\end{equation}
\end{theorem}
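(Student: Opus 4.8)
The plan is to track only two functionals of the solution: the mass $m_0(t)=\|\rho(\cdot,t)\|_{L^1}=\int_{\reals^2}\rho\,dx$ and the second moment $m_2(t)=\int_{\reals^2}|x|^2\rho(x,t)\,dx$ (both finite and differentiable for the smooth, rapidly decaying solutions produced by \eqref{chemo}, which I take for granted). Integrating \eqref{chemo} over $\reals^2$, the diffusion, advection (divergence free), and chemotactic terms are all in divergence form and integrate to zero, leaving
\[
\frac{dm_0}{dt}=-\epsilon\int_{\reals^2}\rho^2\,dx=-\epsilon\|\rho(\cdot,t)\|_{L^2}^2\le 0 .
\]
Thus the reaction is the only mechanism changing the mass, $m_0$ is non-increasing, and this monotonicity is used repeatedly below.

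Next I compute $\tfrac{d}{dt}m_2$ by testing \eqref{chemo} against $|x|^2$. Diffusion contributes $\int(\Delta|x|^2)\rho=4m_0$; the reaction contributes $-\epsilon\int|x|^2\rho^2\le 0$, which I simply discard; the advection term is $\int 2x\cdot u\,\rho\,dx$, which vanishes when $u=0$. The chemotactic term is the important one: writing $\nabla\Delta^{-1}\rho$ through the two-dimensional kernel and integrating by parts gives a double integral which, after symmetrizing in $x$ and $y$ (using $\tfrac{(x-y)\cdot(x-y)}{|x-y|^2}=1$ against $\rho\otimes\rho$), collapses to $-\chi m_0^2$, up to the universal constant fixed by the normalization of $\Delta^{-1}$. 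Hence
\[
\frac{dm_2}{dt}\le 4m_0-\chi m_0^2+\int_{\reals^2}2x\cdot u\,\rho\,dx .
\]

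For part (a), $u=0$, so the last term is absent. Since $m_2(\tau)\ge 0$, integrating from $0$ to $\tau$ yields $\int_0^\tau(\chi m_0^2-4m_0)\,dt\le m_2(0)$. Because $m_0$ is non-increasing, $m_0(t)\ge M:=m_0(\tau)$ on $[0,\tau]$; and on the range $m\ge 2/\chi$ the parabola $\chi m^2-4m$ is increasing, so $\chi m_0^2(t)-4m_0(t)\ge\chi M^2-4M$ whenever $M\ge 2/\chi$. This gives the quadratic inequality $\tau(\chi M^2-4M)\le m_2(0)$; solving for $M$ produces exactly \eqref{dec1a}, while the complementary case $M<2/\chi$ is trivial since the right-hand side of \eqref{dec1a} always exceeds $4/\chi$.

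Part (b) is where the real work lies, because the advection term can make $m_2$ grow, so the naive bound $m_2(t)\le m_2(0)$ fails. I would control it by Cauchy--Schwarz, $\bigl|\int_{\reals^2} 2x\cdot u\,\rho\,dx\bigr|\le 2\|u\|_{L^\infty}m_1(t)$ with $m_1:=\int_{\reals^2}|x|\rho\,dx\le\sqrt{m_0\,m_2}$, and then close a bootstrap on the interval $[0,\chi^{1/3}]$: assuming provisionally the decay $m_0(t)\le C(\chi t)^{-1/2}$, the growth rate $\tfrac{d}{dt}\sqrt{m_2}\lesssim \|u\|_{L^\infty}\sqrt{m_0}$ is integrable and stays bounded by $C(u,m_2(0))$ up to $t=\chi^{1/3}$, since $\int_0^{\chi^{1/3}}\sqrt{m_0}\,dt\lesssim \chi^{-1/4}(\chi^{1/3})^{3/4}=O(1)$. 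With $m_2$ thus bounded by a $\chi$-independent constant on $[0,\chi^{1/3}]$, the integrated second-moment inequality becomes $\tau(\chi M^2-4M)\le C(u,m_2)$, which by the part-(a) algebra gives $m_0(\tau)\le C(u,m_2)(\chi\tau)^{-1/2}$ for $0\le\tau\le\chi^{1/3}$, self-consistent with the provisional decay and hence closing the bootstrap and yielding \eqref{dec1b}. Finally, evaluating at $\tau=\chi^{1/3}$ gives $m_0(\chi^{1/3})\le C(u,m_2)\chi^{-2/3}$, and since $m_0$ is non-increasing the same bound holds in the limit $t\to\infty$. The main obstacle is precisely making the bootstrap rigorous: one must choose the barrier for $m_2$ and the length of the time window so that the advective spreading of the second moment does not outrun the chemotactic concentration before the mass has decayed to the $\chi^{-2/3}$ level, and the scaling $\tau\sim\chi^{1/3}$ is exactly the threshold at which these two effects balance.
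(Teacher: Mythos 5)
Your virial/moment computation is precisely the argument this paper relies on --- it is the one imported from \cite{KR10}, and the paper explicitly reduces Theorem~\ref{chemore} to it: the identity $m_0'=-\epsilon\|\rho\|_{L^2}^2$, the inequality $m_2'\le 4m_0-\frac{\chi}{2\pi}m_0^2+2\int x\cdot u\,\rho\,dx$, the part~(a) algebra, and a bootstrap on $[0,\chi^{1/3}]$ for part~(b) are all the intended route, and your bookkeeping there is sound. The genuine gap is the sentence where you take for granted that \eqref{chemo} produces globally defined, smooth, rapidly decaying solutions on which these manipulations are legitimate. For $d=q=2$ that is exactly the crux, and it is the entire content of Section~\ref{chemsection} (Theorem~\ref{globreg}). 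Equation \eqref{chemo} is a Keller--Segel model: with $\epsilon=0$, any solution with mass above $8\pi/\chi$ blows up in finite time, and the blow-up proof is the very virial identity you wrote (the second moment is driven to zero, forcing concentration). Since $\|\rho_0\|_{L^1}$ is arbitrary relative to $\chi$, your differential inequalities hold only up to a possible blow-up time, and the bounds \eqref{dec1a}--\eqref{dec1b} cannot themselves rule blow-up out without circularity: \eqref{dec1a} guarantees the mass drops below the critical size $\sim\chi^{-1}$ only after a time of order $\chi m_2$, whereas the Keller--Segel blow-up time for strongly supercritical mass is of order $m_2/(\chi m_0^2)$, which is far shorter.

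What is needed, and what the paper actually supplies, is global regularity by a mechanism that does not require the mass to be small. In \cite{KR10} (the case $q>2$) regularity came from an $L^\infty$ bound generated by the reaction; that argument fails at $q=2$ because the quadratic reaction can no longer dominate the quadratic chemotactic nonlinearity pointwise. The paper instead integrates the equation in space-time to obtain $\epsilon\int_0^T\|\rho(\cdot,t)\|_{L^2}^2\,dt\le\|\rho_0\|_{L^1}$, and then runs an $\dot{H}^s$ energy estimate in which the nonlinear terms are bounded by $C\chi\|\rho\|_{L^2}\|\rho\|_{\dot{H}^s}\|\rho\|_{\dot{H}^{s+1}}$ (Gagliardo--Nirenberg plus boundedness of the double Riesz transform) and absorbed into $-\|\rho\|_{\dot{H}^{s+1}}^2$; the resulting differential inequality has the time-integrable coefficient $\chi^2\|\rho\|_{L^2}^2$, which yields a global $H^s$ bound and hence global regularity. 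Your proof needs this theorem (or an equivalent regularity statement) before any of the moment estimates can be asserted for all $\tau>0$; with it, the rest of your argument goes through.
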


As in \cite{KR10}, the key feature of Theorem~\ref{chemore} is that all bounds are independent of $\epsilon.$
Hence, chemotaxis provides a significant improvement over \eqref{simplerlb} for reasonable time scales.
The key ingredient in the proof of of this theorem in \cite{KR10} was global regularity of solutions to
\eqref{chemo} with $d=2,$ $q>2.$ It is well known that solutions of Keller-Segel equation can form singularities in finite time, but in our situation the reaction term has regularizing effect. However the proof in
\cite{KR10} relied on global in time control of $L^\infty$ norm to prove global regularity, and this argument needed
the condition $q>2$ for reaction to offset the chemotactic nonlinearity. Here, we will prove global regularity for the case $d=q=2$ using a different approach. Notice that the regularity properties of solutions to Keller-Segel type equations with reaction terms have been studied earlier in \cite{TeWi,Wink1,Wink2}, but in a different setting.

\section{The reaction-diffusion-advection case}\label{rda}

In this section, we prove Theorems~\ref{freeub} and \ref{lowerbound}.
Let us begin with Theorem~\ref{freeub}. In all arguments below $C$ will denote universal constants that may change from line to line in the proof.
Consider the advection-diffusion equation in $\reals^2$
\begin{equation}\label{eqad}
\partial_t \phi +u \cdot \nabla\phi = \Delta \phi, \,\,\,\phi(x,0)=\phi_0(x).
\end{equation}
Suppose $u$ is smooth and $u = \nabla^\perp H,$ so that $H$ is a stream function corresponding to $u.$
Let us denote $p(x,y,t)$ the fundamental solution of \eqref{eqad}:
\[
\phi(x,t) = \int_{\reals^2} p(x,y,t)\phi_0(y)\,dy.
\]
Recall the following well known bound on the
fundamental solution of \eqref{eqad}.
\begin{theorem}[Norris \cite{Norr}]\label{norristhm}
Let $\phi(x,t)$ satisfy \eqref{eqad}, and assume that the stream function $H(x)$ is bounded. Then
\begin{equation}\label{norrb}
p(x,y,t) \leq Ct^{-1}\exp(-|x-y|^2/Ct),
\end{equation}
where $C$ depends only on $\|H\|_{L^\infty}.$
\end{theorem}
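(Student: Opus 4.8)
The plan is to establish \eqref{norrb} by the Nash--Aronson--Davies strategy, reducing the Gaussian estimate to two ingredients: an unweighted on-diagonal (ultracontractive) bound $\|p(\cdot,y,t)\|_{L^\infty}\le Ct^{-1}$ for the solution operator $S_t$ of \eqref{eqad}, and a weighted $L^2\to L^2$ decay estimate for $S_t$ conjugated by an exponential weight. The decisive point throughout is that $u$ is divergence free, which is exactly what will force the constants to depend only on $\|H\|_{L^\infty}$ and not on any derivative of $u$.

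For the on-diagonal bound I would first observe that the advection term disappears from the $L^2$ energy identity: testing \eqref{eqad} against $\phi$ and integrating by parts, $\int (u\cdot\nabla\phi)\phi\,dx=\tfrac12\int u\cdot\nabla(\phi^2)\,dx=-\tfrac12\int(\nabla\cdot u)\phi^2\,dx=0$, so $\frac{d}{dt}\|\phi\|_{L^2}^2=-2\|\nabla\phi\|_{L^2}^2$. Since the flow conserves mass ($\frac{d}{dt}\int\phi\,dx=0$) and preserves positivity, $\|\phi(\cdot,t)\|_{L^1}=\|\phi_0\|_{L^1}$, and Nash's inequality in $\reals^2$ gives $\|\nabla\phi\|_{L^2}^2\ge c\|\phi\|_{L^2}^4/\|\phi\|_{L^1}^2$. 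Integrating the resulting differential inequality yields $\|S_t\|_{L^1\to L^2}\le Ct^{-1/2}$; by duality (the adjoint equation has drift $-u$, again divergence free) and the semigroup property this upgrades to $\|S_t\|_{L^1\to L^\infty}\le Ct^{-1}$. This step is entirely insensitive to $u$.

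The heart of the matter, and the step I expect to be the main obstacle, is the weighted estimate. Fix $y$, a Lipschitz function $\psi$ with $\|\nabla\psi\|_{L^\infty}\le1$, and a parameter $\alpha>0$, and track $\frac{d}{dt}\int\phi^2 e^{2\alpha\psi}\,dx$. The Laplacian contributes the usual good term $-2\int|\nabla\phi|^2e^{2\alpha\psi}\,dx$ together with a cross term absorbed by Young's inequality. The dangerous contribution is the advective one, which reduces to $2\alpha\int\phi^2(\nabla^\perp H\cdot\nabla\psi)e^{2\alpha\psi}\,dx$ and a priori involves $\nabla H$, ruining the dependence on $\|H\|_{L^\infty}$ alone. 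The trick that saves the argument uses the two-dimensional structure $u=\nabla^\perp H$ through the pointwise identity $\nabla^\perp H\cdot\nabla\psi=-\nabla\cdot(H\,\nabla^\perp\psi)$: integrating by parts moves the derivative off $H$, and crucially the term proportional to $\nabla^\perp\psi\cdot\nabla\psi$ vanishes identically. What survives is $4\alpha\int H\,\phi\,(\nabla^\perp\psi\cdot\nabla\phi)\,e^{2\alpha\psi}\,dx$, bounded by $4\alpha\|H\|_{L^\infty}\int|\phi|\,|\nabla\phi|\,e^{2\alpha\psi}\,dx$, which is absorbed into the good gradient term at the cost of a term $C(1+\|H\|_{L^\infty}^2)\alpha^2\int\phi^2e^{2\alpha\psi}\,dx$. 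Gronwall's inequality then gives $\|e^{\alpha\psi}S_te^{-\alpha\psi}\|_{L^2\to L^2}\le\exp(C(1+\|H\|_{L^\infty}^2)\alpha^2t)$.

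Finally I would combine the two ingredients in the standard way: the unweighted ultracontractivity $\|S_t\|_{L^1\to L^\infty}\le Ct^{-1}$ together with the weighted $L^2$ bound yields, by Davies' lemma, a weighted ultracontractive estimate of the form $e^{\alpha(\psi(x)-\psi(y))}p(x,y,t)\le Ct^{-1}\exp(C(1+\|H\|_{L^\infty}^2)\alpha^2t)$. Choosing $\psi$ to approximate the linear function $x\mapsto\hat e\cdot x$ in the direction $\hat e=(x-y)/|x-y|$, so that $\psi(x)-\psi(y)\approx|x-y|$ with $\|\nabla\psi\|_{L^\infty}\le1$, and then optimizing in $\alpha$ (taking $\alpha\sim|x-y|/t$) converts the exponential factor into the Gaussian $\exp(-|x-y|^2/Ct)$, with $C$ depending only on $\|H\|_{L^\infty}$. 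This is precisely \eqref{norrb}. Apart from the divergence-free cancellation above, every step is routine; the only genuine subtlety is ensuring that that cancellation leaves behind a quantity controlled by $\|H\|_{L^\infty}$ rather than by $\nabla H$.
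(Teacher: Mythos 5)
First, a point of comparison: the paper does not prove this statement at all --- it is quoted from Norris \cite{Norr} as a known result ("we stated the minimum that we need here"), so your attempt is really being measured against the literature (Norris, Fabes--Stroock \cite{FS}, and Osada's work on drifts of the form $\nabla\cdot A$ with $A$ bounded and antisymmetric; note that here $\Delta\phi-u\cdot\nabla\phi=\nabla\cdot\bigl((I+HJ)\nabla\phi\bigr)$ with $J$ the rotation by $\pi/2$, a uniformly elliptic divergence-form operator with bounded non-symmetric coefficients). Your architecture --- Nash ultracontractivity plus exponential-weight (Davies) perturbation --- is the right one, and your second step is correct and is genuinely the crux: the identity $\nabla^\perp H\cdot\nabla\psi=-\nabla\cdot(H\nabla^\perp\psi)$, the integration by parts moving the derivative off $H$, and the pointwise vanishing of $\nabla^\perp\psi\cdot\nabla\psi$ give the weighted bound $\|e^{\alpha\psi}S_te^{-\alpha\psi}\|_{L^2\to L^2}\le e^{C(1+\|H\|_{L^\infty}^2)\alpha^2 t}$ with no derivative of $H$ ever appearing.

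The gap is in your final step. There is no lemma that converts the unweighted bound $\|S_t\|_{L^1\to L^\infty}\le Ct^{-1}$ together with the weighted $L^2\to L^2$ bound into the weighted kernel bound $e^{\alpha(\psi(x)-\psi(y))}p(x,y,t)\le Ct^{-1}e^{C\alpha^2 t}$ by composition. Factorizing $S^\psi_t=S^\psi_{t/4}S^\psi_{t/2}S^\psi_{t/4}$ (with $S^\psi_t=e^{\alpha\psi}S_te^{-\alpha\psi}$) requires \emph{weighted} $L^1\to L^2$ and $L^2\to L^\infty$ bounds on the outer factors, and these do not follow from the unweighted ones: conjugation by $e^{\alpha\psi}$ costs a factor $e^{2\alpha\|\psi\|_{L^\infty}}$, which is exactly what must be avoided since $\psi$ approximates a linear function. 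The theorems that do upgrade on-diagonal bounds plus weighted-$L^2$ (Davies--Gaffney) estimates to Gaussian bounds --- Grigor'yan's integrated maximum principle, Coulhon--Sikora's Phragm\'en--Lindel\"of argument --- are nontrivial results stated for self-adjoint semigroups, which $S_t$ is not. The standard repair, and what the cited authors actually do, is to run the Nash/Moser iteration directly on the twisted semigroup: prove the weighted $L^{2p}$ energy inequalities for dyadic $p$, where your cancellation works verbatim (with $w=\phi^pe^{p\alpha\psi}$ the advective term again produces $\nabla^\perp\psi\cdot\nabla w$, whose dangerous part is proportional to $\nabla^\perp\psi\cdot\nabla\psi=0$), and iterate to obtain $\|S^\psi_t\|_{L^2\to L^\infty}\le Ct^{-1/2}e^{C\alpha^2t}$, with the dual bound $\|S^\psi_t\|_{L^1\to L^2}$ coming from the adjoint equation (drift $-u$, stream function $-H$). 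Composing these and optimizing in $\alpha$ and $\psi$ then yields \eqref{norrb}. So the idea you flagged as the "only genuine subtlety" is indeed correct and essential, but the concluding step needs this weighted iteration rather than an appeal to a composition lemma.
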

The reference \cite{Norr} contains more general results, but we stated the minimum that we need here.
We have
\begin{corollary}\label{norrcor}
Suppose $\phi_0(x)>0$ is such that
\[
\int_{\reals^2} e^{\alpha|x|}\phi_0(x)\,dx < \infty,
\]
for some $0<\alpha \leq \alpha_0.$
Then under conditions of Theorem~\ref{norristhm} we have
\begin{equation}\label{momcon23}
\int_{\reals^2} e^{\alpha|x|}\phi(x,t)\,dx \leq C e^{C\alpha^2 t} \int_{\reals^2} e^{\alpha|x|}\phi_0(x)\,dx.
\end{equation}
\end{corollary}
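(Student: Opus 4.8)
The plan is to reduce the claimed weighted-$L^1$ bound on $\phi(\cdot,t)$ to a pointwise-in-$y$ estimate of the $x$-integral of the weight against the fundamental solution. Writing $\phi(x,t)=\int_{\reals^2}p(x,y,t)\phi_0(y)\,dy$ and using that both $p$ and $\phi_0$ are nonnegative, Tonelli's theorem lets me interchange the order of integration with no integrability concern:
\[
\int_{\reals^2} e^{\alpha|x|}\phi(x,t)\,dx = \int_{\reals^2}\phi_0(y)\Big(\int_{\reals^2} e^{\alpha|x|}p(x,y,t)\,dx\Big)\,dy.
\]
So it suffices to show that the inner integral is bounded by $Ce^{C\alpha^2 t}e^{\alpha|y|}$, uniformly in $y$.

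For the inner integral I would invoke the Norris bound \eqref{norrb}, which is exactly the input this corollary is built to exploit, and then translate. Substituting $z=x-y$ and using the triangle inequality $|x|\le|y|+|z|$ factors out the weight at $y$:
\[
\int_{\reals^2} e^{\alpha|x|}p(x,y,t)\,dx \le Ct^{-1}e^{\alpha|y|}\int_{\reals^2} e^{\alpha|z|}e^{-|z|^2/(Ct)}\,dz.
\]
Everything now rests on estimating the last Gaussian-type integral.

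The one genuinely quantitative step -- and the place where the factor $e^{C\alpha^2 t}$ is produced -- is absorbing the linear exponential weight $e^{\alpha|z|}$ into the Gaussian. I would use the elementary splitting $\alpha|z|\le \frac{|z|^2}{2Ct}+\frac{\alpha^2 Ct}{2}$ (Young's inequality / AM--GM), which gives
\[
\int_{\reals^2} e^{\alpha|z|}e^{-|z|^2/(Ct)}\,dz \le e^{\alpha^2 Ct/2}\int_{\reals^2} e^{-|z|^2/(2Ct)}\,dz = 2\pi Ct\, e^{\alpha^2 Ct/2}.
\]
The prefactor $t^{-1}$ then cancels the linear growth $Ct$ coming from the Gaussian mass, leaving $Ce^{C\alpha^2 t}e^{\alpha|y|}$ for the inner integral, with $C$ depending only on $\|H\|_{L^\infty}$ through Norris's constant. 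Plugging this back and integrating against $\phi_0$ yields \eqref{momcon23}.

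I do not expect a serious obstacle here: once Norris's bound is in hand the argument is just a weighted Gaussian convolution estimate, and the only care needed is to choose the constant in the Young splitting so that it matches the Gaussian decay rate $1/(Ct)$ and still leaves a convergent integral. The hypothesis $0<\alpha\le\alpha_0$ is inherited from the ambient setup of Theorem~\ref{freeub}; the estimate itself holds for any $\alpha>0$ for which the initial weighted mass is finite, and the role of $\alpha_0$ is only to fix a uniform constant for later use.
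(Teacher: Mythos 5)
Your proof is correct and follows essentially the same route as the paper: both reduce the claim to a weighted Gaussian convolution estimate using Norris's bound on the fundamental solution. The only cosmetic difference is that the paper runs the computation with coordinate exponentials $e^{\pm\alpha x_1}$, $e^{\pm\alpha x_2}$ (where the Gaussian integral can be completed exactly) and then combines the four directions, whereas you work directly with the radial weight $e^{\alpha|x|}$ via the triangle inequality and Young's inequality, which is equally valid and slightly more streamlined.
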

\begin{proof}
A direct computation shows that
\[ \int_{\reals^2} e^{\alpha x_1} \phi(x,t)\,dx \leq C e^{C\alpha^2 t} \int_{\reals^2} \phi_0(y) e^{\alpha y_1}\,dy. \]
Combining this and similar computations with $-x_1$ and $\pm x_2$ in the exponent leads to \eqref{momcon23}.
\end{proof}
Now suppose that $\rho(x,t)$ solves
\begin{equation}\label{rhoeq1}
\partial_t \rho +u \cdot \nabla \rho = \Delta \rho - \epsilon \rho^2,
\end{equation}
and
\[
\int \rho_0(x) e^{\alpha |x|}\,dx < \infty,
\]
for every $\alpha \leq \alpha_0.$
\begin{proposition}\label{l2l1}
There exists $t_0(\alpha_0)$ such that if $t \geq t_0,$ then
\begin{equation}
\|\rho(\cdot,t)\|_{L^2}^2 \geq C(\rho_0) \frac{\|\rho(\cdot,t)\|_{L^1}^2}{t \log (\|\rho(\cdot,t)\|_{L^1}^{-1})}.
\end{equation}
\end{proposition}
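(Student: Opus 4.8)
The plan is to bound $\|\rho(\cdot,t)\|_{L^2}$ from below by a \emph{reverse} application of Cauchy--Schwarz on a ball, and to control the resulting error using the exponential moment bound from Corollary~\ref{norrcor}. Write $m_0=\|\rho(\cdot,t)\|_{L^1}$. For any radius $R>0$, Cauchy--Schwarz on $B_R=\{|x|\le R\}$ gives
\[
\int_{B_R}\rho\,dx \le (\pi R^2)^{1/2}\,\|\rho\|_{L^2(B_R)} \le \sqrt{\pi}\,R\,\|\rho(\cdot,t)\|_{L^2},
\]
so that $\|\rho(\cdot,t)\|_{L^2}\ge (\sqrt{\pi}R)^{-1}\bigl(m_0-\int_{|x|>R}\rho\,dx\bigr)$. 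Thus, if $R$ is chosen so that the tail $\int_{|x|>R}\rho\,dx$ is at most $m_0/2$, we obtain $\|\rho(\cdot,t)\|_{L^2}^2\ge m_0^2/(4\pi R^2)$, and the whole problem reduces to showing that a definite fraction of the mass stays inside a ball of radius $R\sim (t\log(m_0^{-1}))^{1/2}$.

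To control the tail I would first transfer the moment estimate to $\rho$. Since $\rho\ge 0$ and the reaction term is nonpositive, $\rho$ is a subsolution of the linear equation \eqref{eqad} with the same initial data: $\partial_t\rho+u\cdot\nabla\rho-\Delta\rho=-\epsilon\rho^2\le 0$. By the parabolic maximum principle $0\le\rho(x,t)\le\phi(x,t)$, where $\phi$ solves \eqref{eqad} with $\phi_0=\rho_0$, so Corollary~\ref{norrcor} applies to $\rho$ and yields, for every $\alpha\le\alpha_0$,
\[
\int_{\reals^2} e^{\alpha|x|}\rho(x,t)\,dx \le C e^{C\alpha^2 t}\,M, \qquad M:=\int_{\reals^2}e^{\alpha_0|x|}\rho_0(x)\,dx,
\]
where I used $e^{\alpha|x|}\le e^{\alpha_0|x|}$ to make the right-hand side uniform in $\alpha$. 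Consequently
\[
\int_{|x|>R}\rho(x,t)\,dx \le e^{-\alpha R}\int_{\reals^2}e^{\alpha|x|}\rho\,dx \le C M\,e^{C\alpha^2 t-\alpha R}.
\]

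It remains to choose $\alpha$ and $R$. Requiring $CM\,e^{C\alpha^2 t-\alpha R}\le m_0/2$ amounts to $\alpha R-C\alpha^2 t\ge \log(m_0^{-1})+\log(2CM)$. Optimizing the left side over $\alpha$ (the maximizer is $\alpha_\ast=R/(2Ct)$, with value $R^2/(4Ct)$) leads to the choice
\[
R^2 = 4Ct\bigl(\log(m_0^{-1})+\log(2CM)\bigr), \qquad \alpha_\ast = \Bigl(\tfrac{\log(m_0^{-1})+\log(2CM)}{Ct}\Bigr)^{1/2}.
\]
The constraint $\alpha_\ast\le\alpha_0$, needed to invoke Corollary~\ref{norrcor}, is precisely what forces $t$ to exceed a threshold $t_0$ determined by $\alpha_0$ (and the initial moment). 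For such $t$ one gets $\|\rho(\cdot,t)\|_{L^2}^2\ge m_0^2/(4\pi R^2)$, and once $m_0$ is below a fixed size so that $\log(m_0^{-1})$ dominates the constant $\log(2CM)$, the denominator $4\pi R^2$ is comparable to $t\log(m_0^{-1})$, giving the claimed inequality with $C(\rho_0)$ absorbing $C$, $\pi$, and $\log(2CM)$.

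The main obstacle is obtaining \emph{exactly} the factor $\log(m_0^{-1})$ and not a worse power of $t$. This is the reason the exponential weight is indispensable: a fixed $\alpha=\alpha_0$ would make the tail bound force $R\sim t$ (because of the $e^{C\alpha_0^2 t}$ growth), destroying the estimate, whereas letting $\alpha=\alpha_\ast\to 0$ as $t\to\infty$ confines half of the mass to the near-diffusive scale $R\sim (t\log(m_0^{-1}))^{1/2}$. The bookkeeping points that require care are verifying $\alpha_\ast\le\alpha_0$ (which is where the hypothesis that $\rho_0$ has an exponential moment and the choice of $t_0$ enter) and checking that the fixed constant $\log(2CM)$ is indeed subdominant, which holds for $t$ beyond the threshold.
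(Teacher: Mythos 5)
Your proof is correct and follows essentially the same route as the paper's: comparison with the linear evolution to invoke Corollary~\ref{norrcor}, an exponential-weight Chebyshev tail bound optimized at $\alpha \sim \sqrt{\log(\|\rho(\cdot,t)\|_{L^1}^{-1})/t}$ (with $\alpha\le\alpha_0$ forcing the threshold $t_0$) to confine half the mass to a ball of radius $R\sim\sqrt{t\log(\|\rho(\cdot,t)\|_{L^1}^{-1})}$, followed by Cauchy--Schwarz on that ball. The only difference is expository: you carry out the optimization and the bookkeeping of the constant $\log(2CM)$ explicitly, which the paper simply absorbs into $C(\rho_0,\alpha)$ and the final constant $C(\rho_0)$.
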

\begin{proof}
Consider $\phi(x,t)$ solving \eqref{eqad} with initial data $\rho_0(x)>0.$ Then by comparison principle, $\rho(x,t) \leq \phi(x,t),$
and therefore by Corollary~\ref{norrcor} we have
\begin{equation}\label{rho345} \int_{\reals^2} e^{\alpha|x|}\rho(x,t)\,dx \leq Ce^{C\alpha^2 t} \int_{\reals^2} \rho_0(y) e^{\alpha |y|}\,dy \end{equation}
for every $\alpha \leq \alpha_0,$ $t \geq 0.$
We would like to find $R(t)$ such that \begin{equation}\label{simple} \int_{B_{R(t)}} \rho(x,t)\,dx \geq \frac12 \int_{\reals^2} \rho(x,t)\,dx, \end{equation}
where $B_{R(t)}$ is a ball of radius $R(t).$ Observe that by \eqref{rho345},
\[ \int_{B^c_{R(t)}} \rho(x,t)\,dx \leq C e^{C\alpha^2 t - \alpha R(t)} \int_{\reals^2} \rho_0(y)e^{\alpha|y|}\,dy. \]
Thus choosing $R(t)$ so that
\[ R(t) \geq C\alpha t + \alpha^{-1} \log (\|\rho(\cdot,t)\|_{L^1}^{-1}) + C(\rho_0,\alpha) \]
will ensure \eqref{simple}. Choosing
\[
\alpha = t^{-1/2} \sqrt{\log(\|\rho(\cdot, t)\|^{-1})}
\]
optimizes the estimate
provided that $t \geq t_0(\alpha_0)$ so that $\alpha \leq \alpha_0.$  Then, for
\[
R(t) = C(\rho_0)\sqrt{t \log \|\rho(\cdot,t)\|_{L^1}^{-1}},
\]
we have
\[ \int_{B_{R(t)}} \rho^2 \,dx \geq \left( \int_{B_{R(t)}} \rho\,dx \right)^2 \frac{1}{|B_{R(t)}|} \geq \frac{C(\rho_0) \|\rho(\cdot,t)\|_{L^1}^2}{t \log(\|\rho(\cdot,t)\|_{L^1}^{-1})}. \]
\end{proof}
Now we are ready to prove Theorem~\ref{freeub}.
\begin{proof}[Proof of Theorem~\ref{freeub}]
From Proposition~\ref{l2l1}, we know that for all $t \geq t_0,$
\begin{equation}\label{nest23}
\partial_t \|\rho(\cdot,t)\|_{L^1} =\epsilon\|\rho(\cdot,t)\|_{L^2}^2 \leq -\epsilon C(\rho_0) t^{-1} \|\rho(\cdot, t)\|_{L^1}^2/\log(\|\rho(\cdot,t)\|_{L^1}^{-1}). \end{equation}
Let us denote $F(z) = z/(\log (ez)^{-1}).$ Then \eqref{nest23} implies
\begin{equation}\label{final1}
F(\|\rho(\cdot,t)\|_{L^1}) \leq \frac{F(\|\rho(\cdot,t_0)\|_{L^1})}{1+\epsilon C(\rho_0) F(\|\rho(\cdot,t_0)\|_{L^1}) \log t/t_0}.
\end{equation}
\end{proof}

Next, we prove Theorem~\ref{lowerbound}.
\begin{proof}[Proof of Theorem~\ref{lowerbound}]
Suppose $\rho(x,t)$ satisfies \eqref{rhoeq1} with $\rho_0>0 \in \cS,$ and $\phi(x,t)$ solves \eqref{eqad} with the same initial data. Then
\[ \partial_t \|\phi(\cdot,t)\|_{L^2}^2 = -\|\nabla \phi\|_{L^2}^2 \leq -\frac{\|\phi\|_{L^2}^4}{\|\phi\|_{L^1}^2} = -\frac{\|\phi\|_{L^2}^4}{\|\rho_0\|_{L^1}^2} \]
by Nash inequality in dimension two and conservation of the integral of $\rho.$ It follows that
\[ \|\phi(\cdot,t)\|_{L^2}^2 \leq {\rm min} \left(\|\rho_0\|_{L^2}^2, C\|\rho_0\|_{L^1}^2 t^{-1}\right). \]
By duality (and incompressibility of $u$), we also have
\[ \|\phi(\cdot,t)\|_{L^{\infty}} \leq {\rm min} \left(\|\rho_0\|_{L^{\infty}}, C\|\rho_0\|_{L^2} t^{-1/2}\right). \]
Combining these estimates via time split at $t/2$ yields
\[  \|\phi(\cdot,t)\|_{L^{\infty}} \leq {\rm min} \left(\|\rho_0\|_{L^{\infty}}, C\|\rho_0\|_{L^1} t^{-1}\right). \]
By comparison principle, the same bound applies to $\rho(x,t).$
Then we can estimate
\begin{equation}\label{rhoL21}
\|\rho(\cdot,t)\|_{L^2}^2 \leq \|\rho(\cdot,t)\|_{L^{\infty}}\|\rho(\cdot,t)\|_{L^1} \leq C\|\rho(\cdot,t/2)\|_{L^1} \|\rho(\cdot,t)\|_{L^1} t^{-1}.
\end{equation}
A strictly weaker bound
\[
\|\rho(\cdot,t)\|_{L^2}^2 \leq C\|\rho_0\|_{L^1} \|\rho(\cdot,t)\|_{L^1} t^{-1}
\]
 is also true. It implies that
\[ \partial_t \|\rho(\cdot,t)\|_{L^1} = -\epsilon \|\rho(\cdot,t)\|_{L^2}^2 \geq -\epsilon C\|\rho_0\|_{L^1} \|\rho(\cdot,t)\|_{L^1} t^{-1}, \]
and therefore
\[ \|\rho(\cdot,t/2)\|_{L^1} \leq 2\|\rho(\cdot,t)\|_{L^1}e^{\epsilon \|\rho_0\|_{L^1}}. \]
Substituting this into \eqref{rhoL21}, we get
\[  \|\rho(\cdot,t)\|_{L^2}^2  \leq C e^{\epsilon \|\rho_0\|_{L^1}} \|\rho(\cdot,t)\|_{L^1}^2 t^{-1}. \]
Hence, we have
\[ \partial_t \|\rho(\cdot,t)\|_{L^1} \geq -\epsilon {\rm min} \left( \|\rho_0\|_{L^2}^2, Ce^{\epsilon \|\rho_0\|_{L^1}} \|\rho(\cdot,t)\|_{L^1}^2 t^{-1}  \right). \]
Define $t_0$ by
\[
\|\rho_0\|_{L^2}^2 = Ce^{\epsilon \|\rho_0\|_{L^1}} \|\rho(\cdot,t_0)\|_{L^1}^2 t_0^{-1}.
\]
Then,  for $t\geq t_0,$ we have
\[ \partial_t \|\rho(\cdot,t)\|_{L^1} \geq -\epsilon C e^{\epsilon \|\rho_0\|_{L^1}} \|\rho(\cdot,t)\|_{L^1}^2 t^{-1}, \]
leading to
\[ \|\rho(\cdot,t)\|_{L^1} \geq \frac{\|\rho(\cdot,t_0)\|_{L^1}}{1+C\epsilon \|\rho(\cdot, t_0)\|_{L^1} \log (t/t_0) \exp(\epsilon\|\rho(\cdot,t_0)\|_{L^1})}. \]
If $\epsilon \|\rho_0\|_{L^1}\le c_0,$ with a sufficiently small $c_0$,
then
\[
t_0 \leq C\|\rho_0\|_{L^1}^2/\|\rho_0\|_{L^2}^2,
\]
and
\[
\|\rho(\cdot,t_0)\|_{L^1} \geq \|\rho_0\|_{L^1}(1-\epsilon C \|\rho_0\|_{L^1}).
\]
Hence, \eqref{simplerlb} follows.
\end{proof}

\section{The chemotactic case}\label{chemsection}

The proof of Theorem~\ref{chemore} is identical to the   case $q>2$
considered in \cite{KR10}, provided that we have global regularity of solutions. It will be convenient for us to work in a setting similar to \cite{KR10}.
Recall that we are considering the equation
\begin{equation}\label{chemnew}
\partial_t \rho+u\cdot\nabla\rho =
\Delta \rho+ \chi \nabla \cdot(\rho \nabla[(\Delta)^{-1}\rho])  -  \epsilon \rho^2, \,\,\,\rho(x,0)=\rho_0(x),~~x\in\Rm^2.
\end{equation}
Define
\[
\|f\|_{M_n} = \int_{\Rm^2} (|\rho(x)| + |\nabla \rho(x)|)(1+|x|^n)\,dx,
\]
and the Banach space $K_{s,n}$ is defined by the norm $\|f\|_{K_{s,n}} = \|f\|_{M_n} + \|f\|_{H^s}.$

\begin{theorem}\label{globreg}
Assume that $n>0$ and $s>d/2+1$ are integers and $\rho_0 \geq 0 \in K_{s,n}.$ Suppose that $u \in C^\infty(\Rm^d \times [0,\infty))$ is divergence free. Then there exists a unique solution $\rho(x,t)$ of the equation
\eqref{chemnew} in $C(K_{s,n}, [0,\infty)) \cap C^\infty(\Rm^d \times (0,\infty)).$
\end{theorem}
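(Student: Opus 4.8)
The plan is to obtain the solution by a local‑existence‑plus‑continuation scheme in which the entire burden falls on one a priori bound that rules out chemotactic concentration. First I would construct a unique local solution in $K_{s,n}$ on a maximal interval $[0,T^*)$ by writing \eqref{chemnew} in mild (Duhamel) form relative to the linear advection–diffusion flow $\partial_t-\Delta+u\cdot\nabla$ and running a contraction argument, treating $\chi\nabla\cdot(\rho\nabla\Delta^{-1}\rho)$ and $-\epsilon\rho^2$ as locally Lipschitz perturbations of $K_{s,n}$; here $s>d/2+1$ supplies the product/algebra estimates, while the weight in $\|\cdot\|_{M_n}$ controls the decay that makes $\nabla\Delta^{-1}\rho$ well defined and legitimizes every integration by parts below. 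Parabolic smoothing promotes this to a $C^\infty$ solution for $t>0$, and nonnegativity of $\rho_0$ is propagated by the maximum principle once the drift part is put in the form $\partial_t\rho+(u-\chi\nabla\Delta^{-1}\rho)\cdot\nabla\rho=\Delta\rho+(\chi-\epsilon)\rho^2$, whose source vanishes at $\rho=0$. The solution then extends globally as long as it does not concentrate; concretely, it suffices to bound $\|\rho(\cdot,t)\|_{L^p}$ for a single fixed $p>1$ on each finite interval, since a uniform $L^p$ bound with $p>1$ forbids the formation of a Dirac mass.

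The key estimate, and the place where the critical exponent $q=2$ is used, is the following $L^p$ computation. Testing \eqref{chemnew} against $\rho^{p-1}$, using incompressibility to annihilate the advection and the identity $\Delta\Delta^{-1}\rho=\rho$ to evaluate the chemotactic term, I obtain
\[
\frac{d}{dt}\|\rho\|_{L^p}^p=-\frac{4(p-1)}{p}\int_{\Rm^2}|\nabla\rho^{p/2}|^2\,dx+\big[(p-1)\chi-p\epsilon\big]\int_{\Rm^2}\rho^{p+1}\,dx .
\]
The chemotactic and reactive contributions both scale like $\int\rho^{p+1}$, with net coefficient $(p-1)\chi-p\epsilon$, which equals $-\epsilon<0$ at $p=1$. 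Hence for every $\epsilon>0$ there is a $p_0>1$ (any $p_0<\chi/(\chi-\epsilon)$ when $\chi>\epsilon$, and any $p_0>1$ otherwise) for which this coefficient is negative, and for such $p_0$ the right‑hand side is $\le 0$, so $\|\rho(\cdot,t)\|_{L^{p_0}}\le\|\rho_0\|_{L^{p_0}}$ for all $t<T^*$. This is the clean replacement for the global $L^\infty$ bound that was available only for $q>2$ in \cite{KR10}: when $q=2$ the reaction no longer dominates the chemotaxis at large $L^p$, but because its coefficient $p\epsilon$ beats the chemotactic $(p-1)\chi$ as $p\downarrow 1$, it still pins down a low $L^p$ norm for any coupling $\epsilon>0$.

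With this uniform $L^{p_0}$ bound ($p_0>1$) together with the $L^1$ bound $\|\rho(\cdot,t)\|_{L^1}\le\|\rho_0\|_{L^1}$ in hand, I would run a routine bootstrap. Calderón–Zygmund estimates give $D^2\Delta^{-1}\rho\in L^{p_0}$ uniformly in $t$, so $\nabla\Delta^{-1}\rho$ lies (uniformly in $t$) in $L^{r}$ with $r>2$; feeding this improved drift integrability into parabolic $L^q$‑maximal regularity for \eqref{chemnew} raises the integrability of $\rho$, and the iteration terminates in $\rho\in L^\infty_{\mathrm{loc}}([0,\infty);L^\infty(\Rm^2))$ after finitely many steps. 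Once $\nabla\Delta^{-1}\rho$ is bounded and Hölder, Schauder theory and the $t>0$ smoothing close the argument, giving the asserted $C(K_{s,n},[0,\infty))\cap C^\infty(\Rm^d\times(0,\infty))$ regularity and hence $T^*=\infty$, with uniqueness inherited from the local contraction. The main obstacle is not the $L^p$ identity, which is elementary, but confirming that a single $L^{p_0}$ bound with $p_0$ possibly close to $1$ really launches the bootstrap through the borderline Sobolev exponent $p=2$; the resolution is that any $p_0>1$ already forces the drift into some $L^{r}$ with $r=2p_0/(2-p_0)>2$ (the inequality $2p_0/(2-p_0)>2$ holding exactly when $p_0>1$), after which a single parabolic step yields an $L^\infty$ bound, so the iteration is in fact short.
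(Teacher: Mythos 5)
Your proposal is correct in outline, but it takes a genuinely different route from the paper's proof. Your key a priori bound is pointwise in time: the $L^p$ identity you write is right (two integrations by parts plus $\Delta\Delta^{-1}\rho=\rho$ give the chemotactic contribution $(p-1)\chi\int\rho^{p+1}$, against the reactive $-p\epsilon\int\rho^{p+1}$), so $\|\rho(\cdot,t)\|_{L^{p_0}}$ is nonincreasing for any $p_0>1$ with $(p_0-1)\chi<p_0\epsilon$; since $L^{p_0}$ with $p_0>1$ is subcritical for the two-dimensional Keller--Segel scaling, the standard bootstrap you sketch (Hardy--Littlewood--Sobolev puts the drift $\chi\nabla\Delta^{-1}\rho$ in $L^\infty_t L^r_x$ with $r=2p_0/(2-p_0)>2$, then Moser/Aronson--Serrin local boundedness, then Schauder) does reach $C^\infty$ and continuation. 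The paper instead integrates the equation over space and time to obtain the space--time control
\[
\epsilon\int_0^T\!\!\int_{\Rm^2}\rho(x,t)^2\,dx\,dt\le\|\rho_0\|_{L^1},
\]
(diffusion, advection and chemotaxis all integrate to zero), and then runs a single $\dot H^s$ energy estimate: Gagliardo--Nirenberg bounds the nonlinear terms by $C\chi\|\rho\|_{L^2}\|\rho\|_{\dot H^s}\|\rho\|_{\dot H^{s+1}}$, the $\dot H^{s+1}$ dissipation absorbs the top order, and a Gronwall-type argument closes precisely because the coefficient $\|\rho\|_{L^2}^2$ is integrable in time. What each buys: the paper's route needs no $L^\infty$ bootstrap and feeds directly into the continuation criterion of \cite{KR10} (a Sobolev bound suffices), so it is short given the $H^s$ machinery already in place; your route isolates more transparently why $q=2$ is the critical exponent (the reactive coefficient $p\epsilon$ beats the chemotactic $(p-1)\chi$ exactly as $p\downarrow1$) and yields a stronger, monotone-in-time bound, at the cost of invoking heavier standard parabolic theory (Calder\'on--Zygmund/HLS, Moser iteration, Schauder). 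Both arguments, as they must, use $\epsilon>0$ essentially, and both degenerate as $\epsilon\to0$. One point to make explicit when you execute the bootstrap: keep the chemotaxis in divergence form, so that the only zeroth-order term is the sink $-\epsilon\rho^2\le0$ and $\rho$ is a subsolution of a linear drift-diffusion equation with subcritical drift; if you instead expand it as $(\chi-\epsilon)\rho^2$, as in your nonnegativity paragraph, that quadratic term is a source of the wrong sign when $\chi>\epsilon$, and the one-step boundedness claim does not follow directly.
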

The local existence is proved in the same manner as in \cite{KR10}. Also, it was shown in \cite{KR10} that to prove global existence, it suffices to prove an a-priori global bound on Sobolev norms of the solution.
\begin{proof}
The first observation is that, integrating in space and time, we obtain a bound
\[
\epsilon \int_0^T \int_{\Rm^2} \rho(x,t)^2 \,dx \leq \|\rho_0\|_{L^1},
\]
for any smooth decaying solution of
\eqref{chemnew} on $[0,T] \times \Rm^2.$  This will be our main control which turns out to be sufficient for global regularity.
Multiply equation \eqref{chemnew} by $(-\Delta)^s \rho$ and integrate. Let us denote by $\dot{H}^s$ the homogenous Sobolev norm. We get 
\[ \frac12 \partial_t \|\rho\|_{\Dot{H}^s}^2 \leq \chi \left| \int_{\Rm^2} (\nabla \rho) \cdot (\nabla (-\Delta)^{-1}\rho) (-\Delta)^s \rho \,dx \right| +
\chi \left| \int_{\Rm^2} \rho^2 (-\Delta)^s \rho\,dx \right| + C(u)\|\rho\|^2_{H^s}
- \|\rho\|^2_{\Dot{H}^{s+1}}. \] 
Consider the expression
\[
\int_{\Rm^2} \rho^2 (-\Delta)^s \rho\,dx.
\]
Integrating by parts, we can represent this integral as a sum of terms of the form
\[
\int_{\Rm^2} D^l \rho D^{s-l}\rho D^s \rho\,dx,
\]
where $l=0,\dots,s$ and $D$ denotes any partial derivative. By H\"older inequality, we have
\[ \left| \int_{\Rm^2} D^l \rho D^{s-l}\rho D^s \rho\,dx \right| \leq \|\rho\|_{\Dot{H}^s}\|D^l \rho\|_{L^{p_l}}\|D^{s-l}\rho\|_{L^{q_l}}, \]
with any $p_l$ and $q_l$ satisfying $p_l^{-1}+q_l^{-1}=1/2.$ Observe that for $d=2$, every integer $0 \leq m \leq s$, and $2 \leq p<\infty$,
we have the Gagliardo-Nirenberg inequality
\[ \|D^m \rho\|_{L^p} \leq C \|\rho\|_{L^2}^{1-a} \|\rho\|^a_{\Dot{H}^{s+1}}, \,\,\,a= \frac{m-\frac{2}{p}+1}{s+1}. \]
Then
\[ \|D^l \rho\|_{L^{p_l}}\|D^{s-l}\rho\|_{L^{q_l}} \leq C\|\rho\|_{L^2}\|\rho\|_{\Dot{H}^{s+1}}, \]
and therefore
\[ \left| \int_{\Rm^2} \rho^2 (-\Delta)^s \rho\,dx \right| \leq C\|\rho\|_{L^2}\|\rho\|_{\Dot{H}^s}\|\rho\|_{\Dot{H}^{s+1}}. \]

Next, consider
\[
\int_{\Rm^2} (\nabla \rho) \cdot (\nabla (-\Delta)^{-1}\rho) (-\Delta)^s \rho \,dx.
\]
Integrating by parts, we get terms that can be estimated similarly to the previous case, using the fact that the double
Riesz transform $\partial_{ij}(-\Delta)^{-1}$ is bounded on $L^p,$ $1<p<\infty.$ The only exceptional terms that appear which have different structure are
\[ \int_{\Rm^2} (\partial_{i_1}\dots \partial_{i_s} \nabla \rho) \cdot  (\nabla (-\Delta)^{-1}\rho)\partial_{i_1}\dots \partial_{i_s} \rho \, dx \]
but these can be reduced to
\[
\int_{\Rm^2} (\partial_{i_1}\dots \partial_{i_s} \rho)^2 \rho \,dx
\]
by another integration by parts, and estimated as before. Altogether, we get
\begin{eqnarray*}
\frac12 \partial_t \|\rho\|_{\Dot{H}^s}^2 \leq C\xi \|\rho\|_{L^2}\|\rho\|_{\Dot{H}^s}\|\rho\|_{\Dot{H}^{s+1}} +C(u)\|\rho\|_{H^s}^2 - \|\rho\|^2_{\Dot{H}^{s+1}} \leq  \\
(C\xi^2 \|\rho\|_{L^2}^2 +C(u))\|\rho\|_{\Dot{H}^s}^2 + C(u)\|\rho\|_{L^2}^2 - \frac12 \|\rho\|^2_{\Dot{H}^{s+1}}.
\end{eqnarray*}
Notice that
\[ \|\rho\|_{\Dot{H}^s} \leq \|\rho\|^{\frac{s+d/2}{s+1+d/2}}_{\Dot{H}^{s+1}}\|\rho\|_{L^1}^{\frac{1}{s+1+d/2}}, \]
and so due to non-increase of $L^1$ norm,
\[ \frac12 \partial_t \|\rho\|_{\Dot{H}^s}^2 \leq (C\xi^2 \|\rho\|_{L^2}^2 +C(u))\|\rho\|_{\Dot{H}^s}^2 + C(u)\|\rho\|_{L^2}^2 -
c \|\rho\|_{\Dot{H}^s}^{2+\frac{2}{s+1+d/2}}. \] From this differential inequality and integrability of $\|\rho\|_{L^2}^2$ in time, a global bound for $\|\rho\|_{\Dot{H}^s}$ follows.
\end{proof}

\noindent {\bf Acknowledgement.} \rm AK is grateful to Stanford University for its hospitality
in winter 2011-2012. AK acknowledges support of the NSF grant DMS-1104415, and thanks the Institute for Mathematics and
Its Applications for stimulating atmosphere at the Workshop on Transport and Mixing in Complex and
Turbulent Flows in March 2010, where he learned about coral spawning from the talk of Prof. Jeffrey Weiss. LR is supported in part by the
NSF grant DMS-0908507.

\end{document}